\documentclass{amsart}
\usepackage{amscd}
\usepackage{amsfonts}
\usepackage{amsmath}
\usepackage{amssymb}
\usepackage{amsthm}
\usepackage{fancyhdr}
\usepackage{latexsym}

\newtheorem{theorem}{Theorem}
\newtheorem{lemma}[theorem]{Lemma}

\newcommand{\R}{\mathbb R}

\begin{document}

\title[Ill-posedness for the HR equation]{Ill-posedness in the critical Sobolev space for the hyperelastic rod equation}

\author{Dan-Andrei Geba}

\address{Department of Mathematics, University of Rochester, Rochester, NY 14627, U.S.A.}
\email{dangeba@math.rochester.edu}
\date{}

\begin{abstract}
This article is the third in a series investigating ill-posedness in critical Sobolev spaces for Camassa-Holm-type equations. Following results for the $b$-Novikov and Fokas-Olver-Rosenau-Qiao equations, we study the hyperelastic rod equation at the critical Sobolev regularity $H^{3/2}(\mathbb R)$. We prove ill-posedness at this endpoint, complementing the local well-posedness theory in $H^s(\mathbb R)$ for $s>3/2$ and the recently established norm-inflation result for $1<s<3/2$.

The nature of the critical ill-posedness depends on the parameter $\gamma$. For $0<\gamma\leq3$, we construct arbitrarily small smooth initial data whose corresponding solutions develop finite-time blow-up, with the maximal lifespan tending to zero with the size of the initial data. For $\gamma<0$ or $\gamma>3$, we construct solutions with arbitrarily small $H^{3/2}$ initial data whose $H^{3/2}$ norm becomes arbitrarily large in arbitrarily short time. 
\end{abstract}

\subjclass[2000]{35Q53, 37K10}
\keywords{Hyperelastic rod equation, Camassa-Holm-type equations, Cauchy problem, well-posedness, ill-posedness, norm inflation.}

\maketitle

%%%%%%%%%%%%%%%%%%%%%%%%%%%%%%%%%%%%%%%%%%%%%%

\section{Introduction}

In this paper, we study the Cauchy problem for the non-periodic hyperelastic rod (HR) equation
\begin{equation}
\begin{cases}
u_t-u_{txx}+3uu_x=\gamma(2u_xu_{xx}+uu_{xxx}),\\
u(0,x)=u_0(x),\qquad x\in\mathbb{R},
\end{cases}
\label{cp}
\end{equation}
where $\gamma\neq0$. Our main result establishes norm inflation for \eqref{cp} in the critical Sobolev space $H^{3/2}(\mathbb R)$. The mechanism depends on the value of the parameter $\gamma$. For $0<\gamma\leq3$, arbitrarily small smooth initial data generate solutions that develop finite-time blow-up, with the maximal lifespan tending to zero with the size of the initial data. For $\gamma<0$ or $\gamma>3$, we construct solutions with arbitrarily small $H^{3/2}$ initial data whose $H^{3/2}$ norm becomes arbitrarily large in arbitrarily short time.

The HR equation was introduced by Dai \cite{D98} as a simplified model for finite-length, finite-amplitude waves propagating in cylindrical compressible hyperelastic rods. The parameter $\gamma$ plays an important role in the structure of the equation. In the particular case $\gamma=1$, \eqref{cp} becomes the Camassa-Holm equation \cite{CH93}, a fundamental model for shallow-water wave propagation. The Camassa-Holm equation is completely integrable and possesses peaked solitary waves, or peakons, on both the real line and the circle. At the other distinguished value $\gamma=0$, the HR equation reduces to the Benjamin-Bona-Mahony equation \cite{BBM72}, which describes unidirectional surface-wave propagation in a channel. Unlike the Camassa-Holm equation, the Benjamin-Bona-Mahony equation is not integrable, and its solitary waves are not solitons. Thus, the HR family connects equations with substantially different analytic and dynamical properties.

The well-posedness theory for \eqref{cp} has been studied extensively. Constantin and Strauss \cite{CS00} established local well-posedness (WP) in $H^s(\mathbb R)$ for $s>3/2$, investigated mechanisms leading to blow-up, and proved orbital stability of solitary waves. Yin \cite{Y03} subsequently extended the WP and blow-up analysis to a modified HR equation containing an additional $\omega u_x$ term. In the periodic setting, Zhou \cite{Z05} obtained analogues of the results of \cite{CS00}. These works identify $s=3/2$ as the natural regularity threshold for the standard energy method.

Further information about the behavior of the flow map above this threshold was obtained by Olson \cite{O06}, Karapetyan \cite{K10,K12}, and Inci \cite{I19}. Their results show that, for $s>3/2$, the data-to-solution map associated with the HR equation is not uniformly continuous in either $H^s(\mathbb R)$ or $H^s(\mathbb T)$. Such a loss of uniform continuity is characteristic of quasilinear evolution equations and further highlights the distinguished role of the index $s=3/2$.

The behavior below the critical index has recently been clarified by Li and Yu \cite{LY26}, who established norm inflation for the HR equation in $H^s(\mathbb R)$ for $1<s<3/2$. More precisely, they constructed smooth initial data that are arbitrarily small in $H^s$ but whose corresponding solutions develop arbitrarily large $H^s$ norm in arbitrarily short time. Their argument, which requires sufficient largeness of the spatial derivative on a nontrivial spatial region, also highlights an important distinction between the $1<s<3/2$ and $s=3/2$ regimes. At the $s=3/2$ critical index a large derivative at a single point can already be converted into divergence of the $H^{3/2}$ norm through a critical logarithmic estimate. Thus, the endpoint $s=3/2$ is not obtained simply by taking a limit in the argument for $1<s<3/2$ and requires a separate analysis.

The purpose of this paper is to establish norm inflation directly at the critical regularity $H^{3/2}(\mathbb R)$. We construct smooth initial data with arbitrarily small $H^{3/2}$ norm and exploit the critical Littlewood-Paley structure to produce an arbitrarily large slope at a single spatial point. For $0<\gamma\leq3$, an odd-symmetry blow-up argument then yields an explicit upper bound on the lifespan and shows that the corresponding $H^{3/2}$ norm becomes unbounded at the blow-up time. For $\gamma<0$ and $\gamma>3$, where no analogous quantitative blow-up estimate is available, we instead construct sequences of smooth initial data whose $H^{3/2}$ norms tend to zero while the corresponding maximal lifespans tend to zero. A critical $B^1_{\infty,\infty}$ continuation criterion then converts this behavior into norm inflation in $H^{3/2}$. 

An additional feature of the critical theory is the dependence of the ill-posedness (IP) mechanism on the parameter $\gamma$. For $0<\gamma\leq3$, arbitrarily small smooth initial data can generate solutions that lose boundedness in $H^{3/2}(\mathbb R)$ in arbitrarily short time, with the maximal lifespan itself becoming arbitrarily small. For $\gamma<0$ or $\gamma>3$, we instead obtain norm inflation in the usual sense: solutions emanating from data of arbitrarily small $H^{3/2}$ norm can reach arbitrarily large $H^{3/2}$ norm in arbitrarily short time. These two mechanisms lead to the following result.

\begin{theorem}
The Cauchy problem \eqref{cp} is IP in $H^{3/2}(\R)$. More specifically:

(i) Let $0<\gamma\leq 3$. Then, for every $\epsilon>0$, there exists initial data $u_0\in H^{\infty}(\R)$ with $\|u_0\|_{H^{3/2}(\R)}\leq \epsilon$ such that the Cauchy problem \eqref{cp} admits a unique solution 
\[
u\in C([0,T); H^\infty(\R))
\]
with maximal lifespan $T<\epsilon$ and
\begin{equation*}
\limsup_{t\to T-}\|u(t)\|_{H^{3/2}(\R)}=\infty.
\end{equation*}

(ii) Let either $\gamma<0$ or $\gamma>3$. Then, for every $\epsilon>0$, there exists a solution $u_\epsilon$ to \eqref{cp} and a time $0<t_\epsilon<\epsilon$ such that
\begin{equation}
\|u_\epsilon(0)\|_{H^{3/2}(\R)}\lesssim \epsilon
\label{0}
\end{equation}
and
\begin{equation}
\|u_\epsilon(t_\epsilon)\|_{H^{3/2}(\R)}\gtrsim \epsilon^{-1}.
\label{-1}
\end{equation}
\label{main-th}
\end{theorem}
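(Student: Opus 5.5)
The plan is to rewrite \eqref{cp} in nonlocal form. With $p=\frac12 e^{-|x|}$, so that $p*(1-\partial_x^2)f=f$, the equation becomes
\[
u_t+\gamma uu_x=-\partial_x p*\Big(\tfrac{3-\gamma}{2}u^2+\tfrac{\gamma}{2}u_x^2\Big).
\]
I take from the literature (Constantin--Strauss) the local well-posedness in $H^s$, $s>3/2$, together with persistence of $H^\infty$ regularity. I will also use a blow-up criterion: the solution extends past $T$ unless $\int_0^T\|u_x\|_{L^\infty}\,dt=\infty$. The sharper form I want is that it extends unless $\int_0^T\|u\|_{B^1_{\infty,\infty}}\,dt=\infty$, obtained via a logarithmic (Kato--Ponce/Brezis--Gallouet type) commutator estimate.

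\textbf{Construction of the data.} The data is the critical building block. Take $u_0$ odd and smooth, with
\[
u_0'(0)=-\sum_{k=N}^{N^2}\frac{1}{k\sqrt{\log N}}\approx -\sqrt{\log N}\to-\infty,
\]
built as a lacunary sum $u_0=\sum_k a_k 2^{-k}\phi(2^k x)$ with $\phi$ odd. Then
\[
\|u_0\|_{H^{3/2}}^2\approx\sum_k a_k^2\approx\sum_k\frac{1}{k^2\log N}\to 0,
\]
while $\sum_k a_k$ diverges. One could instead choose $a_k=1/(k\log k)$ to get a smaller norm; in either case the norm is $O(\epsilon)$ and the slope $M:=-u_0'(0)\ge \epsilon^{-2}$, say. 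The $L^2$ and $H^1$ norms are also small, which controls the nonlocal terms.

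\textbf{Case (i), $0<\gamma\le3$.} Oddness is preserved, so $u(t,0)=0$. Setting $m(t)=u_x(t,0)$, differentiating the equation and evaluating at $x=0$ gives
\[
m'=-\tfrac{\gamma}{2}m^2+\tfrac{3-\gamma}{2}u^2\big|_{0}-p*\Big(\tfrac{3-\gamma}{2}u^2+\tfrac{\gamma}{2}u_x^2\Big)(0).
\]
The term $u^2(0)$ vanishes. For $\gamma\le3$ both kernel terms are nonnegative, so the last term is $\le0$. Hence $m'\le-\frac{\gamma}{2}m^2$, and with $m(0)=-M$ this gives blow-up before $T\le 2/(\gamma M)<\epsilon$. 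Because $u\in H^\infty$ up to $T$ and $u_x$ is unbounded near $T$, the $H^{3/2}$ norm must also be unbounded. If it stayed bounded, the $B^1_{\infty,\infty}$ criterion (in logarithmic form, via $H^{3/2}\subset B^1_{\infty,\infty}$ bounded) would allow continuation, contradicting maximality. This is where the critical continuation criterion is needed: $H^{3/2}$ does not embed in $W^{1,\infty}$.

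\textbf{Case (ii), $\gamma<0$ or $\gamma>3$.} The sign of the source term is no longer favourable, so I will argue that the lifespan $T_\epsilon$ is short. For $\gamma>3$, keep the ODE along the characteristic through $0$ with odd data: $m'\le -\frac{\gamma}{2}m^2+C\|u\|_{H^1}^2$. Energy conservation ($\|u\|_{H^1}$ is conserved for HR) makes the extra term $\lesssim\epsilon^2$. This still forces $m\to-\infty$ within time $\lesssim 1/(\gamma M)$.

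For $\gamma<0$, use the reflection $u(t,x)\mapsto -u(t,-x)$. This symmetry preserves the equation and flips the sign of the slope, so the data should be taken with $m(0)=+M$. The Riccati term becomes $-\frac{\gamma}{2}m^2=\frac{|\gamma|}{2}m^2$, which drives $m\to+\infty$. The nonlocal term has indefinite sign but is again bounded by $C\|u\|_{H^1}^2\lesssim\epsilon^2$.

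In both subcases the lifespan satisfies $T_\epsilon\lesssim M^{-1}<\epsilon$. The critical criterion then gives $\int_0^{T_\epsilon}\|u\|_{B^1_{\infty,\infty}}=\infty$, hence $\sup_{t<T_\epsilon}\|u(t)\|_{H^{3/2}}=\infty$. We pick $t_\epsilon<T_\epsilon$ with $\|u(t_\epsilon)\|_{H^{3/2}}\ge\epsilon^{-1}$.

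\textbf{Main obstacle.} The hard step is proving the critical continuation criterion: boundedness of $\|u\|_{B^1_{\infty,\infty}}$, or even of $\|u\|_{H^{3/2}}$, must prevent blow-up. I would attempt this with the Littlewood--Paley energy estimate
\[
\frac{d}{dt}\|u\|_{H^s}\lesssim\|u_x\|_{L^\infty}\|u\|_{H^s},\qquad s=2,
\]
combined with the logarithmic bound
\[
\|u_x\|_{L^\infty}\lesssim \|u\|_{B^1_{\infty,\infty}}\log\big(e+\|u\|_{H^2}\big)+\|u\|_{L^2}.
\]
Gronwall then gives double-exponential control of $\|u\|_{H^2}$, which rules out blow-up whenever the $B^1_{\infty,\infty}$ norm stays integrable.
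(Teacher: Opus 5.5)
Your proposal is correct and follows essentially the same route as the paper: odd data with dyadic coefficients that are square-summable but not summable, the Riccati inequality for $u_x(t,0)$ (with the nonlocal term bounded by the conserved $H^1$ norm when $\gamma<0$ or $\gamma>3$), and the $B^1_{\infty,\infty}$ continuation criterion obtained from the $H^2$ energy estimate, logarithmic interpolation and Gronwall, combined with $H^{3/2}\hookrightarrow B^1_{\infty,\infty}$. Your physical-space lacunary data and your direct (rather than by-contradiction) argument in (ii) are only cosmetic variations. There are two harmless slips. First, the reflection $u(t,x)\mapsto -u(t,-x)$ fixes odd functions and does not change the sign of $u_x(t,0)$, so for $\gamma<0$ you should simply take $-u_0$ as the (still odd) datum. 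Second, the additive term in your logarithmic inequality should be an absolute constant rather than $\|u\|_{L^2}$: the $L^2$ version fails for small-amplitude functions that stack many dyadic scales at one point, while the constant version is all your Gronwall argument needs.
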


Our critical result is closely related to the corresponding IP theory for the $b$-family of equations
\begin{equation}
u_t-u_{txx}+(b+1)u u_x=uu_{xxx}+b u_xu_{xx},
\label{b}
\end{equation}
which arises as a one-dimensional model for active fluid transport and includes the Camassa-Holm equation as the special case $b=2$. In an influential work, Guo, Liu, Molinet, and Yin \cite{GLM19} established norm inflation in $H^{3/2}$ for the $b$-family in the range $1<b\leq3$, both on the real line and on the circle, complementing the previously known local WP theory for $s>3/2$. More recently, Guo, Ye, and Yin \cite{G25} extended the corresponding non-periodic IP result to the range $b\neq1$. These works provide important precedents for the critical constructions used here. However, the HR equation has a different dependence on its parameter $\gamma$, and the endpoint behavior must be established directly within the HR dynamics. In particular, the arguments below adapt critical constructions from the $b$-family while exploiting the specific blow-up structure and conserved quantities of the HR equation.

We conclude the introduction by outlining the proof strategy. For the range $0<\gamma\leq3$, our argument is inspired by the critical construction of Guo et al. \cite{GLM19}. We construct a sequence of smooth initial data that remains arbitrarily small in $H^{3/2}$ while producing an arbitrarily large negative slope at the origin. Odd symmetry is preserved by the HR flow, and the resulting evolution of the slope at the origin satisfies a Riccati-type differential inequality. This yields an explicit upper bound for the maximal lifespan. To conclude that the $H^{3/2}$ norm itself must diverge at the blow-up time, we establish a critical continuation criterion based on the Besov norm $B^1_{\infty,\infty}$.

The cases $\gamma<0$ and $\gamma>3$ are more delicate, since the sign structure of the nonlocal term prevents us from applying the same quantitative blow-up criterion. In these regimes, we use the initial data introduced in \cite{G25}. The derivative at the origin then satisfies a Riccati-type differential inequality with a lower-order error term controlled by the conserved $H^1$ norm. A comparison argument shows that the corresponding maximal lifespans tend to zero. Combining this with the critical $B^1_{\infty,\infty}$ continuation criterion yields the asserted $H^{3/2}$ norm inflation. Although these arguments are motivated by the critical IP theory for the $b$-family, the endpoint results are proved here directly for the HR equation.

The paper is organized as follows. In Section 2, we collect the analytical tools needed in the proofs. We first recall the WP result of Constantin and Strauss for the HR equation and then establish several blow-up and continuation properties that parallel corresponding results for the $b$-family. In Section 3, we prove the finite-time blow-up assertion in part (i) of the main theorem. Section 4 is devoted to the norm-inflation result in part (ii).

Throughout the paper, we use the standard notation $A\lesssim B$ to denote $A\leq cB$ for some constant $c>0$. We write $A\sim B$ if both $A\lesssim B$ and $B\lesssim A$ hold. Finally, $A\ll B$ means that $A\leq cB$ for a sufficiently small constant $c>0$.

%%%%%%%%%%%%%%%%%%%%%%%%%%%%%%%%%%%%%%%%%%%%%%

\section{Toolbox}

We begin by recalling the relevant function spaces and WP results for the HR equation. Let $\mathcal{S}'(\R)$ denote the space of tempered distributions on $\R$, and define the Fourier transform of a function $f:\R\to\mathbb{C}$ by
\begin{equation*}
\widehat{f}(\xi):= \int_\R e^{-ix\xi} f(x)\,dx.
\end{equation*}
Let $\varphi: \R\to [0,1]$ be a smooth, radially decreasing function satisfying
\begin{equation*}
\varphi(\xi)=\begin{cases}
1, \quad |\xi|\leq 5/4,\\
0, \quad |\xi|\geq 8/5.
\end{cases}
\end{equation*}
For each integer $k\geq 0$, we define the Littlewood-Paley projectors $P_k$ by
\begin{equation*}
\widehat{P_0f}(\xi):=\varphi(\xi)\widehat{f}(\xi),
\end{equation*}
and, for $k\geq1$,
\begin{equation*}
\widehat{P_kf}(\xi)
:=
\left(
\varphi(\xi/2^k)-\varphi(\xi/2^{k-1})
\right)\widehat{f}(\xi).
\end{equation*}
We also define
\begin{equation*}
\widehat{P_{\leq k}f}(\xi):=\varphi(\xi/2^k) \widehat{f}(\xi), \quad \forall\,k\geq 1,
\end{equation*}
so that
\begin{equation*}
P_{\leq k}=\sum_{j=0}^k P_j,  \quad \forall\,k\geq 1.
\end{equation*}

The Besov space $B^s_{p,q}(\R)$ is defined by
\begin{equation*}
B^s_{p,q}(\R):=\left\{f\in \mathcal{S}'(\R);\ \|f\|_{B^s_{p,q}}:= \left\|2^{ks}\|P_k f\|_{L^p}\right\|_{l^q_k}<\infty\right\}.
\end{equation*}
We recall that 
\begin{equation*}
H^s(\R):=\left\{f\in \mathcal{S}'(\R);\ \|f\|_{H^s}:= \left(\int_\R (1+|\xi|^2)^s|\widehat{f}(\xi)|^2\,d\xi\right)^{1/2}<\infty\right\}.
\end{equation*}
and that $H^s(\R)$ coincides with $B^s_{2,2}(\R)$, with equivalent norms.

The main WP results for the HR equation established in \cite{CS00} are consolidated below.

\begin{theorem}
Let $s>3/2$ and $u_0\in H^s(\R)$. Then there exists a maximal time $T>0$ and a unique solution 
\begin{equation*}
u\in C([0,T); H^s(\R))\cap C^1([0,T); H^{s-1}(\R))
\end{equation*}
to the Cauchy problem \eqref{cp}, which depends continuously on the initial data $u_0$. If $T<\infty$, then
\begin{equation}
\limsup_{t\to T^-}\|u(t)\|_{H^s}=\infty.\label{hs-bl}
\end{equation}

Moreover, the quantities
\begin{equation*}
E(u)(t):=\int_{\mathbb R}
\left(u^2(x,t)+u_x^2(x,t)\right)\,dx
\end{equation*}
and
\begin{equation*}
F(u)(t):=\int_{\mathbb R}
\left(u^3(x,t)+\gamma u(x,t)u_x^2(x,t)\right)\,dx
\end{equation*}
are conserved in time.

Finally, the lifespan $T$ is independent of the Sobolev index $s$. In particular, if $u_0\in H^\infty(\R)$, then 
\[u\in C([0,T); H^\infty(\R)). 
\]
\label{wp-th} 
\end{theorem}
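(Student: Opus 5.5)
The plan is to recast \eqref{cp} as a nonlocal transport equation and run the classical Kato / energy-method machinery, as in \cite{CS00}.

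\textbf{Nonlocal form.} Let $p(x)=\tfrac12 e^{-|x|}$, so that $(1-\partial_x^2)^{-1}f=p*f$. A direct rearrangement, using $uu_{xxx}+2u_xu_{xx}=\partial_x^3(u^2/2)-\partial_x(u_x^2/2)$ and $\partial_x^2 p*f=p*f-f$, turns \eqref{cp} into
\begin{equation*}
u_t+\gamma uu_x=-\partial_x p*\Big(\tfrac{3-\gamma}{2}u^2+\tfrac{\gamma}{2}u_x^2\Big).
\end{equation*}
The right-hand side is a bounded map $H^s\to H^s$ for $s>3/2$: $u^2\in H^s$, $u_x^2\in H^{s-1}$ because $H^{s-1}$ is an algebra when $s-1>1/2$, and $\partial_x p*$ gains one derivative. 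The left-hand side is a Burgers-type transport.

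\textbf{Existence.} I would regularize by Friedrichs mollifiers $J_\varepsilon$, or equivalently apply Kato's quasilinear semigroup theorem, and prove the a priori estimate
\begin{equation*}
\frac{d}{dt}\|u\|_{H^s}^2\lesssim \|u_x\|_{L^\infty}\|u\|_{H^s}^2+\|u\|_{L^\infty}\|u\|_{H^s}^2 .
\end{equation*}
The transport part is handled by the Kato--Ponce commutator estimate $\|[\Lambda^s,u]u_x\|_{L^2}\lesssim\|u_x\|_{L^\infty}\|u\|_{H^s}$ together with an integration by parts of $\int \Lambda^s u\,u\,\partial_x\Lambda^s u$. The nonlocal part is handled by the mapping property above, in the sharper tame form bounded by $(\|u\|_{L^\infty}+\|u_x\|_{L^\infty})\|u\|_{H^s}$. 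Sobolev embedding $H^s\subset W^{1,\infty}$ then gives $T\gtrsim 1/\|u_0\|_{H^s}$, and compactness yields a solution in $C([0,T);H^s)\cap C^1([0,T);H^{s-1})$.

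\textbf{Uniqueness and continuity.} Uniqueness follows from an $L^2$ (or $H^{s-1}$) estimate on the difference of two solutions. Continuous dependence, and continuity in time with values in $H^s$, I would obtain by the Bona--Smith approximation argument. The blow-up alternative \eqref{hs-bl} is immediate, since the existence time depends only on a bound for $\|u_0\|_{H^s}$, so the solution can be continued as long as that norm stays bounded.

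\textbf{Independence of the lifespan from $s$.} Take $s>s'>3/2$ and $u_0\in H^s$, and let $T_{s'}$ be the maximal time in $H^{s'}$. On any $[0,T']$ with $T'<T_{s'}$, $\|u\|_{W^{1,\infty}}\lesssim\|u\|_{H^{s'}}$ is bounded. Gronwall applied to the estimate above then bounds $\|u\|_{H^s}$ on $[0,T']$, so by the blow-up alternative $T_s\ge T_{s'}$. The reverse inequality is trivial. Hence $H^\infty$ data give $u\in C([0,T);H^\infty)$.

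\textbf{Conservation laws.} For $H^\infty$ solutions I would verify the identities directly and then extend to all $H^s$ data by density and continuous dependence. For $E$, multiplying the equation by $u$ and integrating gives $\tfrac12\frac{d}{dt}E$ on the left. The term $\int 3u^2u_x$ vanishes, and
\begin{equation*}
\gamma\int u\,(2u_xu_{xx}+uu_{xxx})\,dx=0,
\end{equation*}
since $\int u^2u_{xxx}=-2\int uu_xu_{xx}$. For $F$, I would differentiate $\int(u^3+\gamma uu_x^2)$ in time, substitute $u_t$ from the nonlocal form, and check that the result is an exact derivative. This is a routine but lengthy algebraic computation; it is the Hamiltonian structure of the equation, with $F$ the Hamiltonian and $E$ a Casimir.

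\textbf{Main obstacle.} I expect the commutator energy estimate at fractional $s$, uniform in the regularization, to be the main technical step, together with the Bona--Smith argument for continuity in $H^s$. Everything else reduces to Gronwall and algebra.
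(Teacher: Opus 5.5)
The paper gives no proof of this theorem; it imports it from Constantin and Strauss \cite{CS00}. Your outline is the standard Kato/energy-method argument behind that citation, and it is correct in its essentials. In particular, the paper itself uses the same ingredients to prove the continuation criterion \eqref{b1i}: the tame bound $\|u^2\|_{H^{s-1}}+\|u_x^2\|_{H^{s-1}}\lesssim(\|u\|_{L^\infty}+\|u_x\|_{L^\infty})\|u\|_{H^s}$, the transport estimate, and Gronwall. Your deduction of the $s$-independence of $T$ from this estimate, with $T_s\le T_{s'}$ coming from uniqueness in $H^{s'}$, is the right one.

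Two small corrections.

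First, of your two uniqueness options only the $H^{s-1}$ one works for all $s>3/2$. In a direct $L^2$ energy estimate for $w=u-v$, you must bound $\partial_x p*(w_xq)$, with $q=u_x+v_x$, by $\|w\|_{L^2}$. That means controlling $w_xq$ in $H^{-1}$. Writing $w_xq=(wq)_x-wq_x$, this requires $q_x$ to be a multiplier $H^1\to L^2$, which is not guaranteed when $q_x$ is only known to lie in $H^{s-2}$ with $s<2$. So the $L^2$ estimate does not close. In $H^{s-1}$ the argument does close:
\begin{itemize}
\item $\|w_xq\|_{H^{s-2}}\lesssim\|w\|_{H^{s-1}}\|q\|_{H^{s-1}}$ holds precisely because $s>3/2$;
\item after writing $uw_x=\partial_x(uw)-u_xw$, the transport term is handled by $\|[\Lambda^{s-1}\partial_x,u]w\|_{L^2}\lesssim\|u\|_{H^s}\|w\|_{H^{s-1}}$.
\end{itemize}

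Second, $E$ is not a Casimir. The equation reads $u_t=-\tfrac12(1-\partial_x^2)^{-1}\partial_x\Phi$ with $\Phi=\delta F/\delta u=3u^2-\gamma u_x^2-2\gamma uu_{xx}$. This operator sends $\delta E/\delta u=2(1-\partial_x^2)u$ to $-u_x\neq0$, so $E$ is the momentum associated with translation invariance. The mislabel is harmless, since you verify conservation of $E$ directly. For $F$, skew-adjointness gives the exact derivative at once: with $\Psi=(1-\partial_x^2)^{-1}\Phi$, one has $\tfrac{d}{dt}F=-\tfrac12\int(\Psi-\Psi_{xx})\Psi_x\,dx=0$.
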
 

%\begin{remark}
%Additionally, the maximal time $T$  satisfies the lower bound estimate
%\begin{equation*}
%T\geq \frac{c(s)}{\|u_0\|_{H^s}}
%\end{equation*}
%with $c(s)>0$ being a constant depending solely on $s$, as shown by Karapetyan \cite{K12}.
%\end{remark}

Next, we establish two finite-time blow-up criteria, each adapted to one of the two parts of our main result. The first applies to odd initial data and is similar to the one established in \cite{C98} for the Camassa-Holm equation.

\begin{lemma}
Assume that $0<\gamma\leq 3$. Let $u_0\in H^3(\mathbb R)$ be real-valued and odd, with $u_0'(0)<0$. Then the corresponding solution to \eqref{cp} blows up in finite time. Moreover, its lifespan $T$ satisfies
\[
T<-\frac{2}{\gamma u_0'(0)}.
\]
\label{bl-hr}
\end{lemma}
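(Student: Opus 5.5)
The plan is to reduce the statement to a Riccati-type differential inequality for the slope at the origin,
\[
m(t):=u_x(t,0).
\]

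\textbf{Step 1: nonlocal form.} I first rewrite \eqref{cp} in nonlocal transport form. Using
\[
\gamma(2u_xu_{xx}+uu_{xxx})=\gamma\,\partial_x\!\left(uu_{xx}+\tfrac12u_x^2\right),
\]
and inverting $1-\partial_x^2$ with the kernel $p(x)=\tfrac12e^{-|x|}$, the equation becomes
\[
u_t+\gamma uu_x=-\partial_x p*\Big(\tfrac{3-\gamma}{2}u^2+\tfrac{\gamma}{2}u_x^2\Big).
\]
Differentiating in $x$ and using $\partial_x^2p*f=p*f-f$ gives
\[
u_{tx}+\gamma uu_{xx}=-\tfrac{\gamma}{2}u_x^2+\tfrac{3-\gamma}{2}u^2-p*\Big(\tfrac{3-\gamma}{2}u^2+\tfrac{\gamma}{2}u_x^2\Big).
\]
Since $u_0\in H^3$, Theorem \ref{wp-th} gives $u\in C([0,T);H^3)\cap C^1([0,T);H^2)$. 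Hence $u_x,u_{xx}$ are continuous and bounded, and $t\mapsto u_x(t,0)$ is $C^1$, so the identity holds pointwise.

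\textbf{Step 2: oddness is preserved.} If $u$ solves \eqref{cp}, then $v(t,x):=-u(t,-x)$ also solves it: each term of the equation flips sign under this transformation. Moreover $v(0)=u_0$ because $u_0$ is odd. By the uniqueness part of Theorem \ref{wp-th}, $v=u$, so $u(t)$ is odd for all $t\in[0,T)$ and in particular $u(t,0)=0$.

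\textbf{Step 3: the Riccati inequality.} Evaluating the differentiated equation at $x=0$ kills the $\gamma uu_{xx}$ and $u^2$ terms, leaving
\[
m'(t)=-\tfrac{\gamma}{2}m(t)^2-\Big[p*\Big(\tfrac{3-\gamma}{2}u^2+\tfrac{\gamma}{2}u_x^2\Big)\Big](t,0).
\]
For $0<\gamma\le3$ both coefficients $\tfrac{3-\gamma}{2}$ and $\tfrac{\gamma}{2}$ are nonnegative and $p>0$, so the convolution term is nonnegative. It is in fact strictly positive as long as $u(t)\not\equiv0$, which holds since $u_0\not\equiv0$ and $E$ is conserved. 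Therefore
\[
m'(t)<-\tfrac{\gamma}{2}m(t)^2 .
\]
Consequently $m$ is strictly decreasing and stays below $m(0)=u_0'(0)<0$. Dividing by $m^2$ gives $\frac{d}{dt}\big(1/m\big)>\gamma/2$, and integrating,
\[
\frac1{m(t)}>\frac1{u_0'(0)}+\frac{\gamma t}{2}.
\]
Since $1/m(t)<0$ for all $t$, the right-hand side must remain negative on the lifespan. If $T\ge -2/(\gamma u_0'(0))$, then letting $t\uparrow -2/(\gamma u_0'(0))$ (or evaluating at that time when $T$ is larger) forces $m(t)\to-\infty$, or yields a contradiction, at a time where the solution is still in $H^3$.

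\textbf{Step 4: conclusion.} On compact subintervals of $[0,T)$ the solution is bounded in $H^3\subset C^1$, so $m(t)$ stays bounded there. Hence Step 3 gives $T<-2/(\gamma u_0'(0))<\infty$. By \eqref{hs-bl}, the solution blows up in finite time.

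\textbf{Expected difficulties.} The only delicate points are the following.
\begin{itemize}
\item Justifying the pointwise differentiation of $u_x(t,0)$, which the $H^3$ regularity handles.
\item Obtaining the strict inequality for $T$. This comes from the strict positivity of the convolution term, or alternatively from the fact that the comparison bound forces blow-up strictly before the Riccati time.
\end{itemize}
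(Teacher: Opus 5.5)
Your argument follows the paper's proof step for step. Oddness comes from the symmetry $u(x,t)\mapsto-u(-x,t)$ and uniqueness; you use the nonlocal form with $p=\frac12e^{-|x|}$, differentiate, evaluate at $x=0$, and compare $m(t)=u_x(t,0)$ with a Riccati solution. Everything up to $T\le T^*:=-2/(\gamma u_0'(0))$ is correct.

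\textbf{The gap is the strict inequality.} From $0>1/m(t)>1/u_0'(0)+\gamma t/2$ you only get $t<T^*$ for each $t\in[0,T)$, i.e.\ $T\le T^*$. In the borderline case $T=T^*$, the conclusion $m(t)\to-\infty$ as $t\uparrow T^*$ is exactly what is expected at the maximal time. Since $T^*$ then lies in no compact subinterval of $[0,T)$, neither Step 3 nor Step 4 gives a contradiction. Your ``alternative'' justification does not help: comparison through a non-strict Riccati inequality only gives blow-up no later than $T^*$.

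\textbf{The fix} is to use your strict-positivity observation quantitatively. Integrate the exact identity
\[
\frac{d}{dt}\frac{1}{m(t)}=\frac{\gamma}{2}+\frac{\big[p*\big(\frac{3-\gamma}{2}u^2+\frac{\gamma}{2}u_x^2\big)\big](t,0)}{m(t)^2}
\]
and fix any $t_0\in(0,T)$. The second term on the right is continuous in $t$ and strictly positive, because $u(t)\not\equiv0$ by conservation of $E$. Hence its integral over $[0,t_0]$ is some $\delta>0$. For $t\in[t_0,T)$ this gives
\[
0>\frac{1}{m(t)}\ge \frac{1}{u_0'(0)}+\frac{\gamma t}{2}+\delta,
\]
and therefore $T\le T^*-2\delta/\gamma<T^*$.

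For comparison, the paper's own proof uses only $g'+\frac{\gamma}{2}g^2\le 0$, so it too rigorously yields just $T\le T^*$. Your positivity observation is the right extra ingredient; it only needs to be used in this integrated form.
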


\begin{proof}
The HR equation is invariant under the transformation
\[
u(x,t)\mapsto -u(-x,t).
\]
By uniqueness, oddness of the initial datum therefore implies that the corresponding solution remains odd throughout its lifespan. In particular,
\begin{equation}
u(0,t)=u_{xx}(0,t)=0.
\label{u00}
\end{equation}

Let
\[
G(x):=\frac12 e^{-|x|}
\]
denote the Green's function of the operator $(1-\partial_x^2)^{-1}$ on $\mathbb R$. The HR equation can then be written in the equivalent form
\begin{equation}
u_t+\gamma uu_x
+\partial_x\!\left\{
G\ast\left(
\frac{3-\gamma}{2}u^2
+\frac{\gamma}{2}u_x^2
\right)
\right\}=0.\label{utg}
\end{equation}
Differentiating with respect to $x$ and rearranging the resulting terms, we obtain
\begin{equation}
u_{tx}+\gamma uu_{xx}+\frac{\gamma}{2}u_x^2
=
\frac{3-\gamma}{2}u^2
-
G\ast\left(
\frac{3-\gamma}{2}u^2
+\frac{\gamma}{2}u_x^2
\right).
\label{xhr}
\end{equation}

Evaluating \eqref{xhr} at $x=0$ and using \eqref{u00}, we find
\[
u_{tx}(0,t)
+\frac{\gamma}{2}u_x^2(0,t)
=
-
G\ast\left(
\frac{3-\gamma}{2}u^2
+\frac{\gamma}{2}u_x^2
\right)(0,t).
\]
Setting
\[
g(t):=u_x(0,t),
\]
and using $0<\gamma\leq3$, $G\geq0$, and $u^2,u_x^2\geq0$, we deduce
\[
g'(t)+\frac{\gamma}{2}g^2(t)\le 0,
\qquad t\in[0,T).
\]
Since $g(0)=u_0'(0)<0$, comparison with the solution of the Riccati equation
\[
y'+\frac{\gamma}{2}y^2=0,
\qquad
y(0)=u_0'(0),
\]
shows that $g(t)$ blows up to $-\infty$ no later than the blow-up time of $y$, namely
\[
-\frac{2}{\gamma u_0'(0)}.
\]
Consequently, the PDE solution cannot remain regular beyond this time, and hence
\[
T<-\frac{2}{\gamma u_0'(0)}.
\]
\end{proof}

The second blow-up criterion is more general and agrees with the corresponding criteria established for other Camassa-Holm-type equations; see, for example, \cite{BCD11} for the Camassa-Holm equation and \cite{G25} for the $b$-equation. We therefore only sketch the proof, following the standard argument used in these settings. In particular, our argument follows closely the proof given in \cite{BCD11}.

\begin{lemma}
Let $u_0$ be as in Theorem \ref{wp-th}, and let $u$ denote the corresponding solution. If the lifespan $T$ is finite, then
\begin{equation}
\int_0^T\,\|u(t)\|_{B^{1}_{\infty, \infty}}\,dt=\infty.\label{b1i}
\end{equation}
\end{lemma}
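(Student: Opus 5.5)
We argue by contradiction: suppose $T<\infty$ but $\int_0^T\|u(t)\|_{B^1_{\infty,\infty}}\,dt<\infty$. We will show that $\|u(t)\|_{H^s}$ stays bounded on $[0,T)$, which contradicts \eqref{hs-bl} in Theorem \ref{wp-th}. By the last assertion of Theorem \ref{wp-th}, the lifespan does not depend on $s$. Hence we may replace $u_0$ by a smoothed datum, or work directly with a fixed $s>3/2$, and justify the computations by the usual regularization.

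The starting point is the nonlocal form \eqref{utg}, viewed as a transport equation
\[
u_t+\gamma u u_x=-\partial_x G\ast\Big(\tfrac{3-\gamma}{2}u^2+\tfrac{\gamma}{2}u_x^2\Big)=:R(u).
\]
We apply $P_k$ and take the $L^2$ inner product with $P_ku$. The transport part is handled by the standard commutator estimate for $[P_k,u\partial_x]$, as in \cite{BCD11}. We first estimate it in terms of $\|u_x\|_{L^\infty}$ in the form
\[
\Big\|2^{ks}\|[P_k,u\partial_x]u\|_{L^2}\Big\|_{l^2_k}\lesssim \|u_x\|_{L^\infty}\|u\|_{H^s},
\]
together with the term $\frac12\int u_x|P_ku|^2$. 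The forcing $R(u)$ is of order $-1$ applied to quadratic terms. The Moser-type product estimate gives
\[
\|R(u)\|_{H^s}\lesssim \|u^2\|_{H^{s-1}}+\|u_x^2\|_{H^{s-1}}\lesssim \big(\|u\|_{L^\infty}+\|u_x\|_{L^\infty}\big)\|u\|_{H^s}.
\]
Combining the two, and using Gronwall, yields
\[
\|u(t)\|_{H^s}\le \|u_0\|_{H^s}\exp\Big(C\int_0^t(\|u\|_{L^\infty}+\|u_x\|_{L^\infty})\,d\tau\Big).
\]
Here $\|u\|_{L^\infty}\lesssim\|u\|_{H^1}$ is bounded because of the conservation of $E(u)$.

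The main obstacle is that $\|u_x\|_{L^\infty}$ is not controlled by $\|u\|_{B^1_{\infty,\infty}}$. We therefore use the logarithmic interpolation inequality, as in \cite{BCD11}:
\[
\|u_x\|_{L^\infty}\lesssim \|u\|_{L^2}+\|u\|_{B^1_{\infty,\infty}}\log\Big(e+\frac{\|u\|_{H^s}}{\|u\|_{B^1_{\infty,\infty}}}\Big),
\]
valid for $s>3/2$. It follows by splitting the sum $\sum_k 2^k\|P_ku\|_{L^\infty}$ at a frequency $N$. Low frequencies are bounded by $N\|u\|_{B^1_{\infty,\infty}}$. High frequencies are bounded through Bernstein by $2^{-N(s-3/2)}\|u\|_{H^s}$. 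We then optimize in $N$.

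Substituting this bound, we obtain for $X(t)=\log(e+\|u(t)\|_{H^s})$ an inequality of the form
\[
X(t)\le X(0)+C\int_0^t\big(1+\|u\|_{B^1_{\infty,\infty}}\big)X(\tau)\,d\tau.
\]
This inequality is linear in $X$. Gronwall then gives $X(t)\le X(0)\exp\big(C\int_0^T(1+\|u\|_{B^1_{\infty,\infty}})\big)<\infty$, a uniform double-exponential bound on $\|u(t)\|_{H^s}$ over $[0,T)$. This contradicts \eqref{hs-bl}, and \eqref{b1i} follows.

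An alternative route avoids the logarithm. One estimates the commutator directly in terms of $\|u\|_{B^1_{\infty,\infty}}$ plus a Lipschitz norm, but only at the level of $B^{s-1}$ rather than $B^s$, following \cite{G25}. We plan to use the logarithmic route above.
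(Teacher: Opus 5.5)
Your proposal is correct and follows the paper's proof essentially step for step. Both use the transport form \eqref{utg}, the $H^s$ transport/commutator estimate driven by $\|u_x\|_{L^\infty}$, Moser bounds on the order $-1$ forcing, Gronwall, the logarithmic interpolation inequality, and then a second linear Gronwall on $\log(e+\|u\|_{H^s})$ that contradicts \eqref{hs-bl}.

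The differences are cosmetic, with one caveat:
\begin{itemize}
\item You control $\|u\|_{L^\infty}$ by the conserved $E(u)$. You also use the interpolation in the form $B\log(e+A/B)$, which is bounded by $(1+B)\log(e+A)$ because $B\mapsto B\log(e+A/B)$ is increasing. The paper instead folds both into \eqref{wx-li}.
\item The $s$-independence of the lifespan does not justify replacing $u_0$ by a smoothed datum, so keep to the direct fixed-$s$ estimate.
\end{itemize}
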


\begin{proof}
We rewrite the HR equation in transport form using \eqref{utg}:
\begin{equation*}
u_t+\gamma uu_x
=-\partial_x(1-\partial^2_x)^{-1}\left(
\frac{3-\gamma}{2}u^2
+\frac{\gamma}{2}u_x^2
\right).
\end{equation*}
Standard energy estimates for transport equations (e.g., Theorem 3.14 in \cite{BCD11}) yield
\begin{equation*}\aligned
&e^{-C\int_0^{t}\|u_x(t')\|_{L^\infty}\,dt'}\|u(t)\|_{H^s}\\ 
&\qquad \leq \|u_0\|_{H^s}
+C\int_0^t e^{-C\int_0^{t'}\|u_x(t'')\|_{L^\infty}\,dt''}\left(\|u^2(t')\|_{H^{s-1}}+\|u_x^2(t')\|_{H^{s-1}}\right)\,dt', \endaligned
\end{equation*}
for all $t\in [0,T)$. Here and below, $C>0$ denotes a constant depending only on the fixed parameters in the argument and may change from line to line. 

Since $s>1$, standard product estimates imply
\begin{equation*}
\|u^2(t')\|_{H^{s-1}}+
+\|u_x^2(t')\|_{H^{s-1}}\leq C (\|u(t')\|_{L^\infty}+\|u_x(t')\|_{L^\infty})\|u(t')\|_{H^s}.
\end{equation*}
Combining the preceding estimates and applying Gronwall's inequality, we obtain
\begin{equation*}
\|u(t)\|_{H^s}\leq  \|u_0\|_{H^s}\,e^{C\int_0^t \|u(t')\|_{L^\infty}+\|u_x(t')\|_{L^\infty}\,dt'}, \quad \forall \,t\in [0,T).
\end{equation*}

Since $s>3/2$, we can apply the logarithmic interpolation inequality (see (3.32) in \cite{BCD11})
\begin{equation}
\|w\|_{L^\infty}+\|w_x\|_{L^\infty}\leq C\left( 1+\|w\|_{B^1_{\infty,\infty}}\ln(1+\|w\|_{H^s})\right)\label{wx-li}
\end{equation}
and derive
\begin{equation*}
\aligned
\ln(1+\|u(t)\|_{H^s})\leq &\ln(1+ \|u_0\|_{H^s}) +Ct\\&+C\int_0^t \|u(t')\|_{B^1_{\infty,\infty}}\ln(1+\|u(t')\|_{H^s})\,dt', \quad \forall \,t\in [0,T).
\endaligned
\end{equation*}
A further application of Gronwall's inequality yields
\begin{equation*}
\ln(1+\|u(t)\|_{H^s})\leq \left(\ln(1+ \|u_0\|_{H^s}) +Ct\right)e^{C\int_0^t \|u(t')\|_{B^1_{\infty,\infty}}\,dt'}, \quad \forall \,t\in [0,T).
\end{equation*}

Suppose now, for contradiction, that 
\[
\int_0^T\,\|u(t)\|_{B^{1}_{\infty, \infty}}\,dt<\infty.
\]
Since $T<\infty$, the preceding estimate implies that
\[
\sup_{t\in[0,T)}\|u(t)\|_{H^s}<\infty.
\]
This contradicts the blow-up criterion \eqref{hs-bl}. Hence \eqref{b1i} follows.
\end{proof}

Finally, we record a logarithmic Gronwall inequality from \cite{GLM19}. 
\begin{lemma}
Let $A\in C^1([0,T);\R^+)$, with $0<T\leq \infty$, and suppose that 
\[
\frac{d}{dt} A(t)\leq B\, A(t)\ln(1+A(t)), \qquad \forall\,t\in [0,T),
\]
for some constant $B>0$. Then
\[
A(t)\leq (1+A(0))^{e^{Bt}}, \qquad \forall\,t\in [0,T).
\] 
\label{gron}
\end{lemma}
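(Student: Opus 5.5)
Since $A$ is positive, I will pass to the doubly logarithmic quantity, which turns the hypothesis into a linear differential inequality. Set
\[
L(t):=\ln\bigl(1+A(t)\bigr),
\]
which is $C^1$ on $[0,T)$ and nonnegative because $A\geq 0$. By the chain rule and the hypothesis,
\[
L'(t)=\frac{A'(t)}{1+A(t)}\leq B\,\frac{A(t)}{1+A(t)}\,\ln(1+A(t))\leq B\,L(t),
\]
where I used $A/(1+A)\leq 1$ together with $L\geq 0$. The sign of $L$ matters here: dropping the factor $A/(1+A)$ only enlarges the right-hand side because $L\geq0$.

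Next I apply the standard linear Gronwall argument. The product $e^{-Bt}L(t)$ has derivative $e^{-Bt}(L'-BL)\leq 0$, so it is nonincreasing, and therefore
\[
L(t)\leq L(0)\,e^{Bt}, \qquad t\in[0,T).
\]
Written out, this is $\ln(1+A(t))\leq e^{Bt}\ln(1+A(0))$. Exponentiating gives
\[
1+A(t)\leq (1+A(0))^{e^{Bt}},
\]
and hence $A(t)\leq (1+A(0))^{e^{Bt}}$, which is the claimed bound.

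There is no real obstacle in this argument. The only points needing care are that $L\geq 0$, which justifies discarding the factor $A/(1+A)$, and that $L$ is $C^1$ because $A$ is $C^1$ and $1+A>0$.
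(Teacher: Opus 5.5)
Your proof is correct and complete. Passing to $L=\ln(1+A)\geq 0$, dropping the factor $A/(1+A)\le 1$, and using the integrating factor $e^{-Bt}$ gives $\ln(1+A(t))\le e^{Bt}\ln(1+A(0))$, which is even slightly stronger than the stated bound. The paper gives no proof of this lemma and simply cites it from \cite{GLM19}; your reduction to a linear Gronwall inequality for $\ln(1+A)$ is the standard argument, and because you never divide by $\ln(1+A)$ it also covers the case $A(0)=0$.
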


%%%%%%%%%%%%%%%%%%%%%%%%%%%%%%%%%%%%%%%%%%%%%%

\section{Proof of part (i) of Theorem \ref{main-th}}

We first recall that the HR equation can be equivalently written in momentum form as
\begin{equation}
y_t+\gamma y_x u +2\gamma yu_x+3(1-\gamma)uu_x=0, \qquad y=(1-\partial^2_x)u.\label{ne}
\end{equation}
For $u_0\in H^2$, the corresponding solution satisfies 
\[
u\in C([0,T); H^2(\R)).
\] 
Multiplying \eqref{ne} by $y$ and integrating by parts in $x$ gives
\begin{equation}
\aligned
\frac{d}{dt}\int_{\R}y^2\,dx&=-3\gamma \int_{\R} u_x\,y^2\,dx + 6(\gamma-1) \int_{\R} uu_x\,y\,dx\\
&\lesssim \|u_x\|_{L^\infty}\|y\|_{L^2}(\|u\|_{L^2}+\|y\|_{L^2}).
\endaligned
\label{y-en}
\end{equation}
Using the equivalence
\begin{equation}
\|y(t)\|_{L^2}=\|(1-\partial^2_x)u(t)\|_{L^2}\sim \|u(t)\|_{H^2},
\label{y-u}
\end{equation}
and the logarithmic interpolation inequality \eqref{wx-li}, we obtain
\begin{equation}
\aligned
\frac{d}{dt}\left\{\|y(t)\|^2_{L^2}\right\}&\lesssim  \|u_x\|_{L^\infty}\|y\|^2_{L^2}\\
&\lesssim 
\left(1+\|u(t)\|_{B^1_{\infty,\infty}}\ln(1+\|u(t)\|_{H^2})\right)\|y(t)\|^2_{L^2}\\
&\lesssim \left(1+\|u(t)\|_{B^1_{\infty,\infty}}\right)\ln(1+\|y(t)\|^2_{L^2})\|y(t)\|^2_{L^2}.
\endaligned
\label{diff-ineq}
\end{equation}

We now specify the initial data leading to finite-time blowup. This construction is based on the initial data introduced in \cite{GLM19} for the Camassa-Holm equation. For the cutoff function $\varphi$ introduced in the previous section, let
\[
\psi(\xi):=\varphi(\xi)-\varphi(2\xi).
\] 
Then
\[
\psi(\xi)=1, \qquad \forall\ \frac{4}{5}\leq |\xi|\leq \frac{5}{4}.
\]
Choose $\tilde{\varphi}\in C_0^\infty(\R)$ to be nonnegative and even, with 
\[
\text{supp}\,\tilde{\varphi}\subseteq\left\{\frac{4}{5}\leq |\xi|\leq \frac{5}{4}\right\}.
\]
It follows that
\[
\tilde{\varphi}\psi=\tilde{\varphi}.
\]
For $k\geq 1$, define $h_k\in \mathcal{S}(\R)$ by
\[
\widehat{h_k}(\xi):=i\frac{\xi}{2^k}\,\tilde{\varphi}(\frac{\xi}{2^k}),\qquad \forall\,k\geq 1,
\]
and set
\[
h:=\sum_{k\geq 1}\frac{1}{4^k\,k^{2/3}}\,h_k.
\]
Since $\tilde{\varphi}$ is real-valued and even, $h$ is real-valued and odd. Moreover, 
\begin{equation}
P_kh=\frac{1}{4^k\,k^{2/3}}\,h_k, \ \forall\,k\geq 1,\label{hk}
\end{equation}
and
\begin{equation}
 \|h\|_{H^{3/2}}\sim \left\|\frac{1}{k^{2/3}}\right\|_{l^2_k}<\infty.\label{h32}
\end{equation}
Fix $\epsilon>0$ and, for $N\geq 1$, define 
\begin{equation*}
u_{0,\epsilon}:=\frac{\epsilon}{\|h\|_{H^{3/2}}}\,P_{\leq N}h=\frac{\epsilon}{\|h\|_{H^{3/2}}}\,\sum_{1\leq k\leq N}\frac{1}{4^k\,k^{2/3}}\,h_k.
\end{equation*}
Then 
\[u_{0,\epsilon}\in \mathcal{S}(\R)\subset H^\infty(\R), \qquad 
\|u_{0,\epsilon}\|_{H^{3/2}}\leq \epsilon.
\] 
Furthermore, a direct computation gives
\[
-u'_{0,\epsilon}(0)= \frac{\epsilon}{2\pi\|h\|_{H^{3/2}}}\left(\int_\R \xi^2 \tilde{\varphi}(\xi)\,d\xi\right)\sum_{1\leq k\leq N}\,\frac{1}{k^{2/3}}.
\]
Since
\[
\sum_{k=1}^\infty \frac{1}{k^{2/3}}=\infty,
\]
we may choose $N$ sufficiently large, depending on $\epsilon$, $h$, and $\widetilde\varphi$, such that
\[
-u_{0,\epsilon}'(0)>\frac{2}{\gamma\epsilon}.
\]
By Theorem \ref{wp-th} and Lemma \ref{bl-hr}, the corresponding maximal solution satisfies
\[
u\in C([0,T); H^\infty(\R))
\] 
and
\[
T<-\frac{2}{\gamma u_{0,\epsilon}'(0)}<\epsilon.
\]

We claim that
\begin{equation}
\limsup_{t\to T-}\|u(t)\|_{B^1_{\infty,\infty}}=\infty.\label{b1ii}
\end{equation}
Suppose otherwise. Then there exists $M=M(\epsilon)>0$ such that
\[
\|u(t)\|_{B^1_{\infty,\infty}}\leq M, \qquad \forall\,t\in [0,T).
\] 
In view of \eqref{diff-ineq}, we have
\begin{equation}
\frac{d}{dt}\left\{\|y(t)\|^2_{L^2}\right\}\leq C(1+M)\ln(1+\|y(t)\|^2_{L^2})\|y(t)\|^2_{L^2}, \qquad \forall\,t\in [0,T).\label{diff-2}
\end{equation}
Lemma \ref{gron} implies that
\[
\sup_{t\in[0,T)}
\|y(t)\|_{L^2}
<\infty.
\]
By \eqref{y-u},
\[
\sup_{t\in[0,T)}
\|u(t)\|_{H^2}
<\infty.
\]
This contradicts \eqref{hs-bl}, since $T<\infty$ is the maximal lifespan of the solution. Hence \eqref{b1ii} holds.

Finally, the embedding
\[
B^{3/2}_{2,2}(\mathbb R)
\hookrightarrow
B^1_{\infty,\infty}(\mathbb R)
\]
gives
\begin{equation}
\|w\|_{B^1_{\infty,\infty}}
\lesssim
\|w\|_{B^{3/2}_{2,2}}
\sim
\|w\|_{H^{3/2}}.
\label{b1h32}
\end{equation}
Combining this estimate with \eqref{b1ii}, we conclude that
\[
\limsup_{t\to T-}\|u(t)\|_{H^{3/2}}=\infty.
\]
This completes the proof of part (i) of Theorem \ref{main-th}.

%%%%%%%%%%%%%%%%%%%%%%%%%%%%%%%%%%%%%%%%%%%%%%

\section{Proof of part (ii) of Theorem \ref{main-th}}

We argue by contradiction. Assume that there exists $\epsilon_0>0$ such that, for every initial datum satisfying
\[
\|u(0)\|_{H^{3/2}}\lesssim\epsilon_0
\]
and generating a maximal solution
\[
u\in C([0,T_{\rm max});H^{3/2}(\R))
\]
to the Cauchy problem \eqref{cp}, one has
\[
\|u\|_{L^\infty([0,\min\{\epsilon_0,T_{\rm max}\});H^{3/2})}
\lesssim\epsilon_0^{-1}.
\]
We derive a contradiction separately in the cases $\gamma<0$ and $\gamma>3$.

\subsection*{The case $\gamma<0$.}

We use the sequence of initial data introduced in \cite{G25} in the study of the $b$-equation. With the notation introduced in the previous section, define, for $N\geq2$,
\begin{equation*}
u_{0,N}
:=
-\frac1{\ln N}
\sum_{2\leq k\leq N}
\frac{1}{4^k k^{2/3}}h_k.
\end{equation*}
Then $u_{0,N}\in H^\infty(\mathbb R)$ is real-valued and odd. Moreover, \eqref{hk} and \eqref{h32} imply
\begin{equation}
\|u_{0,N}\|_{H^{3/2}}
\lesssim
\frac1{\ln N},
\label{n32}
\end{equation}
uniformly in $N$. A direct calculation gives
\[
u_{0,N}'(0)
=
\frac1{2\pi\ln N}
\left(
\int_{\mathbb R}\xi^2\widetilde\varphi(\xi)\,d\xi
\right)
\sum_{2\leq k\leq N}\frac1{k^{2/3}},
\]
and therefore
\begin{equation}
u_{0,N}'(0)
\gtrsim
\frac{N^{1/3}}{\ln N},
\label{u0n}
\end{equation}
also uniformly in $N$.

Let
\[
u_N\in C([0,T_{\max}^N);H^\infty(\mathbb R))
\]
be the corresponding maximal solution. Since $u_{0,N}$ is odd, uniqueness implies that
\[
u_N(x,t)=-u_N(-x,t)
\]
throughout its lifespan. Arguing as in the proof of Lemma \ref{bl-hr}, we obtain
\[
g_N'(t)+\frac{\gamma}{2}g_N^2(t)
=
-
G*
\left(
\frac{3-\gamma}{2}u_N^2
+
\frac{\gamma}{2}(u_N)_x^2
\right)(0,t),\qquad
t\in[0,T_{\rm max}^N),
\]
where
\[
g_N(t):=(u_N)_x(0,t).
\]
Since $\gamma<0$, we may discard the nonnegative contribution coming from $(u_N)_x^2$ on the right-hand side and obtain
\[
g_N'(t)+\frac{\gamma}{2}g_N^2(t)
\geq
-
G*
\left(
\frac{3-\gamma}{2}u_N^2
\right)(0,t), \qquad
t\in[0,T_{\rm max}^N).
\]

By Young's inequality,
\[
\begin{aligned}
G*\!\left(\frac{3-\gamma}{2}u_N^2\right)(0,t)
&\lesssim
\|G\|_{L^\infty}\|u_N^2(t)\|_{L^1}\\
&\lesssim
\|u_N(t)\|_{L^2}^2.
\end{aligned}
\]
Since the $H^1$ norm is conserved,
\[
\|u_N(t)\|_{L^2}^2
\le
\|u_N(t)\|_{H^1}^2
=
\|u_{0,N}\|_{H^1}^2
\le
\|u_{0,N}\|_{H^{3/2}}^2
\lesssim
\frac1{\ln^2N}.
\]
Consequently,
\[
g_N'(t)
\ge
\frac{-\gamma}{2}g_N^2(t)
-
\frac{C}{\ln^2N},
\qquad
t\in[0,T_{\rm max}^N),
\]
for some constant $C>0$ independent of $N$.

By \eqref{u0n},
\[
g_N(0)
\gtrsim
\frac{N^{1/3}}{\ln N},
\]
and hence, for $N$ sufficiently large,
\[
\frac{-\gamma}{2}g_N^2(0)-\frac{C}{\ln^2N}>0.
\]
Consider the solution $z_N$ of the Riccati equation
\[
z_N'=\frac{-\gamma}{2}z_N^2
-
\frac{C}{\ln^2N},
\qquad
z_N(0)=g_N(0).
\]
By the comparison principle,
\[
g_N(t)\geq z_N(t)
\]
as long as both functions exist. Since 
\[
\frac{-\gamma}{2}z_N^2(0)-\frac{C}{\ln^2N}=\frac{-\gamma}{2}g_N^2(0)-\frac{C}{\ln^2N}>0,
\]
$z_N$ blows up to $+\infty$ in finite time. Its blow-up time is
\[
\tau_N
=
\frac{\ln N}{\sqrt{-2C\gamma}}
\ln
\left(
\frac{
\sqrt{-\gamma/2}\,u_{0,N}'(0)+C^{1/2}/\ln N
}{
\sqrt{-\gamma/2}\,u_{0,N}'(0)-C^{1/2}/\ln N
}
\right).
\]
Using
\[
\ln\left(\frac{1+x}{1-x}\right)
\leq
\frac{2x}{1-x},
\qquad 0<x<1,
\]
we obtain
\[
\tau_N
\lesssim
\frac{1}{
\sqrt{-\gamma/2}\,u_{0,N}'(0)-C^{1/2}/\ln N}.
\]
In view of \eqref{u0n},
\[
\lim_{N\to \infty}\tau_N=0,
\]
and, since $T_{\max}^N$ cannot exceed the blow-up time of the comparison solution, we have
\[
\lim_{N\to \infty}T_{\max}^N=0.
\]
Thus, for $N$ sufficiently large,
\[
T_{\max}^N<\epsilon_0.
\]

At the same time, \eqref{n32} implies that
\[
\|u_{0,N}\|_{H^{3/2}}
\lesssim
\epsilon_0
\]
for all sufficiently large $N$. The contradiction hypothesis therefore gives
\[
\|u_N\|_{L^\infty([0,T_{\max}^N);H^{3/2})}
\lesssim
\epsilon_0^{-1}.
\]
Using \eqref{b1h32},
\[
\|u_N\|_{L^\infty([0,T_{\max}^N);B^1_{\infty,\infty})}
\lesssim
\epsilon_0^{-1}.
\]
Hence
\[
\int_0^{T_{\max}^N}
\|u_N(t)\|_{B^1_{\infty,\infty}}\,dt
<\infty.
\]
This contradicts \eqref{b1i}, since $T_{\max}^N<\infty$. Therefore the contradiction hypothesis is false when $\gamma<0$.

\subsection*{The case $\gamma>3$.}

We now consider the sequence of initial data
\[
v_{0,N}:=-u_{0,N},
\qquad N\geq2.
\]
Then $v_{0,N}\in H^\infty(\mathbb R)$ is real-valued and odd and
\[
\|v_{0,N}\|_{H^{3/2}}
=
\|u_{0,N}\|_{H^{3/2}}
\lesssim
\frac1{\ln N},
\]
while
\begin{equation}
-v_{0,N}'(0)
=
u_{0,N}'(0)
\gtrsim
\frac{N^{1/3}}{\ln N}.
\label{v0n}
\end{equation}

Let
\[
v_N\in C([0,T_{\max}^N);H^\infty(\mathbb R))
\]
be the corresponding maximal solution, which, by uniqueness, remains odd throughout its lifespan. Hence, we can argue as in the proof of Lemma \ref{bl-hr} and derive
\[
h_N'(t)+\frac{\gamma}{2}h_N^2(t)
=
-
G*
\left(
\frac{3-\gamma}{2}v_N^2
+
\frac{\gamma}{2}(v_N)_x^2
\right)(0,t), 
\qquad
t\in[0,T_{\rm max}^N),
\]
where
\[
h_N(t):=(v_N)_x(0,t).
\]
Since $\gamma>3$, this gives
\[
h_N'(t)+\frac{\gamma}{2}h_N^2(t)
\leq
G*
\left(
\frac{\gamma-3}{2}v_N^2
\right)(0,t),
\qquad
t\in[0,T_{\rm max}^N).
\]

As in the previous case, conservation of the $H^1$ norm and Young's inequality imply
\[
G*
\left(
\frac{\gamma-3}{2}v_N^2
\right)(0,t)
\lesssim
\|v_N(t)\|_{L^2}^2
\lesssim
\frac1{\ln^2N}.
\]
Thus
\[
h_N'(t)
\leq
-\frac{\gamma}{2}h_N^2(t)
+
\frac{C}{\ln^2N},
\qquad
t\in[0,T_{\rm max}^N),
\]
for some constant $C>0$ independent of $N$.

By \eqref{v0n},
\[
h_N(0)
=
v_{0,N}'(0)
\lesssim
-\frac{N^{1/3}}{\ln N},
\]
and in particular
\[
-\frac{\gamma}{2}h_N^2(0)
+
\frac{C}{\ln^2N}
<0
\]
for $N$ sufficiently large. If we let $z_N$ be the solution of the Riccati equation
\[
z_N'=-\frac{\gamma}{2}z_N^2(t)
+
\frac{C}{\ln^2N},
\qquad
z_N(0)=h_N(0),
\]
then $z_N$ blows up to $-\infty$ in finite time. By comparison,
\[
h_N(t)\leq z_N(t)
\]
up to the blow-up time. Arguing as in the previous case, we obtain
\[
\lim_{N\to \infty}T_{\max}^N=0.
\]
In particular, after increasing $N$ if necessary,
\[
T_{\max}^N<\epsilon_0.
\]

On the other hand,
\[
\|v_{0,N}\|_{H^{3/2}}
=
\|u_{0,N}\|_{H^{3/2}}
\lesssim
\frac1{\ln N},
\]
so for sufficiently large $N$,
\[
\|v_{0,N}\|_{H^{3/2}}
\lesssim
\epsilon_0.
\]
The contradiction hypothesis therefore yields
\[
\|v_N\|_{L^\infty([0,T_{\rm max}^N);H^{3/2})}
\lesssim
\epsilon_0^{-1}.
\]
By \eqref{b1h32},
\[
\|v_N\|_{L^\infty([0,T_{\rm max}^N);B^1_{\infty,\infty})}
\lesssim
\epsilon_0^{-1}.
\]
Consequently,
\[
\int_0^{T_{\max}^N}
\|v_N(t)\|_{B^1_{\infty,\infty}}\,dt
<\infty,
\]
contradicting \eqref{b1i} because $T_{\max}^N<\infty$. This contradiction completes the proof of part (ii) of Theorem \ref{main-th} in the case $\gamma>3$.

%%%%%%%%%%%%%%%%%%%%%%%%%%%%%%%%%%%%%%%%%%%%%%

\section*{Acknowledgements}
The author is deeply grateful to God and the Most Holy Theotokos for Their love, patience, and mercy, which helped him while working on this paper. 

He is also thankful to Alex Himonas for his friendship and for introducing and guiding him to work on Camassa-Holm-type equations. 

%%%%%%%%%%%%%%%%%%%%%%%%%%%%%%%%%%%%%%%%%%%%%%

\bibliographystyle{amsplain}
\bibliography{bNE}

@article {CH93,
    AUTHOR = {Camassa, R. and Holm, D.},
     TITLE = {An integrable shallow water equation with peaked solitons},
   JOURNAL = {Phys. Rev. Lett.},
  FJOURNAL = {Physical Review Letters},
    VOLUME = {71},
      YEAR = {1993},
    NUMBER = {11},
     PAGES = {1661--1664},
}

@article {GLM19,
    AUTHOR = {Guo, Z. and Liu, X. and Molinet, L. and Yin,
              Z.},
     TITLE = {Ill-posedness of the {C}amassa-{H}olm and related equations in
              the critical space},
   JOURNAL = {J. Differential Equations},
  FJOURNAL = {Journal of Differential Equations},
    VOLUME = {266},
      YEAR = {2019},
    NUMBER = {2-3},
     PAGES = {1698--1707},
}

@article {D98,
    AUTHOR = {Dai, H.-H.},
     TITLE = {Model equations for nonlinear dispersive waves in a
              compressible {M}ooney-{R}ivlin rod},
   JOURNAL = {Acta Mech.},
  FJOURNAL = {Acta Mechanica},
    VOLUME = {127},
      YEAR = {1998},
    NUMBER = {1-4},
     PAGES = {193--207},
}

@article {BBM72,
    AUTHOR = {Benjamin, T. B. and Bona, J. L. and Mahony, J. J.},
     TITLE = {Model equations for long waves in nonlinear dispersive
              systems},
   JOURNAL = {Philos. Trans. Roy. Soc. London Ser. A},
  FJOURNAL = {Philosophical Transactions of the Royal Society of London.
              Series A. Mathematical and Physical Sciences},
    VOLUME = {272},
      YEAR = {1972},
    NUMBER = {1220},
     PAGES = {47--78},
}

@article {CS00,
    AUTHOR = {Constantin, A. and Strauss, W. A.},
     TITLE = {Stability of a class of solitary waves in compressible elastic
              rods},
   JOURNAL = {Phys. Lett. A},
  FJOURNAL = {Physics Letters. A},
    VOLUME = {270},
      YEAR = {2000},
    NUMBER = {3-4},
     PAGES = {140--148},
}

@article {Y03,
    AUTHOR = {Yin, Z.},
     TITLE = {On the {C}auchy problem for a nonlinearly dispersive wave
              equation},
   JOURNAL = {J. Nonlinear Math. Phys.},
  FJOURNAL = {Journal of Nonlinear Mathematical Physics},
    VOLUME = {10},
      YEAR = {2003},
    NUMBER = {1},
     PAGES = {10--15},
}

@article {Z05,
    AUTHOR = {Zhou, Y.},
     TITLE = {Local well-posedness and blow-up criteria of solutions for a
              rod equation},
   JOURNAL = {Math. Nachr.},
  FJOURNAL = {Mathematische Nachrichten},
    VOLUME = {278},
      YEAR = {2005},
    NUMBER = {14},
     PAGES = {1726--1739},}

@article {I19,
    AUTHOR = {Inci, H.},
     TITLE = {On the well-posedness of the hyperelastic rod equation},
   JOURNAL = {Ann. Mat. Pura Appl. (4)},
  FJOURNAL = {Annali di Matematica Pura ed Applicata. Series IV},
    VOLUME = {198},
      YEAR = {2019},
    NUMBER = {3},
     PAGES = {795--802},
}

@book {K12,
    AUTHOR = {Karapetyan, D.},
     TITLE = {On the well-poseness of the hyperelastic rod equation},
      NOTE = {Thesis (Ph.D.)--University of Notre Dame},
 PUBLISHER = {ProQuest LLC, Ann Arbor, MI},
      YEAR = {2012},
     PAGES = {103},
}

@article {K10,
    AUTHOR = {Karapetyan, D.},
     TITLE = {Non-uniform dependence and well-posedness for the hyperelastic
              rod equation},
   JOURNAL = {J. Differential Equations},
  FJOURNAL = {Journal of Differential Equations},
    VOLUME = {249},
      YEAR = {2010},
    NUMBER = {4},
     PAGES = {796--826},
}

@book {BCD11,
    AUTHOR = {Bahouri, H and Chemin, J.-Y. and Danchin, R.},
     TITLE = {Fourier analysis and nonlinear partial differential equations},
    SERIES = {Grundlehren der mathematischen Wissenschaften [Fundamental
              Principles of Mathematical Sciences]},
    VOLUME = {343},
 PUBLISHER = {Springer, Heidelberg},
      YEAR = {2011},
     PAGES = {xvi+523},
}

@article {G25,
    AUTHOR = {Guo, Y. and Ye, W. and Yin, Z.},
     TITLE = {Sharp ill-posedness and global well-posedness for the
              {$b$}-equation},
   JOURNAL = {J. Differential Equations},
  FJOURNAL = {Journal of Differential Equations},
    VOLUME = {434},
      YEAR = {2025},
     PAGES = {Paper No. 113297, 12},}

@article {C98,
    AUTHOR = {Constantin, A. and Escher, J.},
     TITLE = {Global existence and blow-up for a shallow water equation},
   JOURNAL = {Ann. Scuola Norm. Sup. Pisa Cl. Sci. (4)},
  FJOURNAL = {Annali della Scuola Normale Superiore di Pisa. Classe di
              Scienze. Serie IV},
    VOLUME = {26},
      YEAR = {1998},
    NUMBER = {2},
     PAGES = {303--328},}

@article {O06,
    AUTHOR = {Olson, E.},
     TITLE = {Non-uniform dependence on initial data for a family of non-linear evolution equations},
   JOURNAL = {Differential Integral Equations},
  FJOURNAL = {Differential and Integral Equations. An International Journal
              for Theory \& Applications},
    VOLUME = {19},
      YEAR = {2006},
    NUMBER = {10},
     PAGES = {1081--1102},
}

@unpublished {LY26,
 AUTHOR = {Li, J. and Yu, Y.},
TITLE = { Norm inflation and low-regularity ill-posedness for the rod equation},
NOTE = {preprint, arXiv:2604.12423},
}
  
\end{document}